%
%
%
%
\documentclass{amsart}

\usepackage{hyperref}

\newtheorem{theorem}{Theorem}[section]
\newtheorem{lemma}[theorem]{Lemma}

\theoremstyle{definition}

\newtheorem{corollary}[theorem]{Corollary}
\theoremstyle{remark}
\newtheorem{remark}[theorem]{Remark}

\numberwithin{equation}{section}

\DeclareMathOperator{\Eord}{{Eord}}



\begin{document}

\title{On the ascending chain condition for mixed difference ideals}

\author{Alexander Levin}
\address{Department of Mathematics, The Catholic University of America, Washington, D.C. 20064}

\email{levin@cua.edu}
\thanks{The author was supported by NSF Grant CCF \#1016608.}



\subjclass[2000]{Primary 12H10; Secondary 39???}

\date\today


\keywords{Difference ring, difference ideal, difference polynomial}

\begin{abstract}
We show that the ring of difference polynomials over a difference field does not satisfy the ascending chain condition for mixed difference ideals. This result solves an open problem of difference algebra stated by E. Hrushovski in connection with the development of difference algebraic geometry.
\end{abstract}

\maketitle

\section{Introduction}

Recently E. Hrushovski \cite{Hrushovski} developed the theory of difference schemes where the key role is played by mixed difference ideals and well-mixed difference rings (these rings appear as difference factor rings by mixed difference ideals; the corresponding definitions are presented in the next section). Any difference integral domain, as well as any difference ring whose structure endomorphism is Frobenius, is well-mixed. The most attractive property of the class of well-mixed rings is the fact that they admit a reasonable theory of localization; as it is shown in \cite[Chapter 3]{Hrushovski}, a difference scheme based on well-mixed rings makes sense intrinsically, without including the generating difference rings explicitly in the data.

In \cite{Hrushovski}, E. Hrushovski raised an important question whether mixed difference ideals of $R$ satisfy the ascending chain condition. In this paper, we resolve this open problem. We present a process of ``shuffling'' (see Lemma~\ref{lem:31}) for the construction of the mixed difference ideal generated by a set and use this construction to prove that a ring of difference polynomials over a difference field does not satisfy the ascending chain condition for mixed difference ideals.

\section{Background information}

Recall that a {\em difference ring} is a commutative ring $R$ considered together with a finite set $\Sigma$ of mutually commuting injective endomorphisms of $R$ called {\em translations}. If $\Sigma$ consists of one translation, we say that the difference ring $R$ is {\em ordinary}, otherwise, it is called {\em partial}. In what follows, we consider only ordinary difference rings. If $\sigma$ is the translation of such a difference ring $R$, we say that $R$ is a $\sigma$-{\em ring}. For any element $a\in R$, an element of the form $\sigma^{k}(a)$ ($k\in \mathbb{N}$) is said to be a {\em transform} of $a$. (As usual, $\mathbb{N}$ denotes the set of all nonnegative integers.) In what follows, such an element will be also denoted by $a_{k}$ (if $k=0$, we write $a$ instead of $a_{0}$). If a difference ring $R$ with translation $\sigma$ is a field, it is called a difference field or a $\sigma$-field.

Let $R$ be a difference ($\sigma$-) ring and $R_{0}$ a subring of $R$ such that $\sigma(R_{0})\subseteq R_{0}$.  Then $R_{0}$ is called a {\em difference\/} (or $\sigma$-) {\em subring\/} of $R$ and the ring $R$ is said to be a {\em difference\/} (or $\sigma$-) {\em overring\/} of $R_{0}$. In this case, the restriction of the translation on $R_{0}$ is denoted by same symbol $\sigma$. If $J$ is an ideal of $R$ such that $\sigma(J)\subseteq J$,   then $J$ is called a {\em difference\/} (or $\sigma$-) ideal of $R$. A $\sigma$-ideal $J$ of $R$ is called {\em reflexive} if for any $a\in R$, the inclusion $a_{1}\in J$ ($a\in R$) implies $a\in J$. If a prime ideal $P$ of $R$ is closed with respect to $\sigma$ (that is, $\sigma(P)\subseteq P$), it is called a {\em prime difference} (or {\em prime} $\sigma$-) {\em ideal} of $R$.

If $S$ is a subset of $R$, then the intersection of all $\sigma$-ideals of $R$ containing $S$ is denoted by $[S]$. Clearly, $[S]$ is the smallest $\sigma$-ideal of $R$ containing $S$; as an ideal, it is generated by the set  $$\{\sigma^{k}(a) | a\in S,\, k\in\mathbb{N}\}.$$  If $J = [S]$, we say that the $\sigma$-ideal $J$ is generated by the set $S$ called a {\em set of difference} (or $\sigma$-) {\em generators} of $J$. If $S$ is finite, $S=\big\{a^{(1)},\dots, a^{(m)}\big\}$, we write $$J = \big[a^{(1)},\dots, a^{(m)}\big]$$ and say that $J$ is a finitely generated $\sigma$-ideal of $R$. In this case, the elements $a^{(1)},\dots, a^{(m)}$ are said to be $\sigma$-generators of $J$.

Let $R$ be a difference ($\sigma$-) ring, $R_{0}$ a $\sigma$-subring of $R$ and $B\subseteq R$. The intersection of all $\sigma$-subrings of $R$ containing $R_{0}$ and $B$ is called the {\em $\sigma$-subring of $R$ generated by the set $B$ over $R_{0}$}, it is denoted by $R_{0}\{B\}$. As a ring, $R_{0}\{B\}$ coincides with the ring $$R_{0}[\{b_{k}\,|\,b\in B,\, k\in \mathbb{N}\}]$$ obtained by adjoining the set $\{b_{k}\,|\,b\in S,\, k\in\mathbb{N}\}$ to $R_{0}$. The set $B$ is said to be the set of {\em difference} (or $\sigma$-) {\em generators} of the difference ring $R_{0}\{B\}$ over $R_{0}$. If this set is finite, $B = \big\{b^{(1)},\dots, b^{(m)}\big\}$, we say that $R' = R_{0}\{B\}$ is a finitely generated difference (or $\sigma$-) ring extension (or overring) of $R_{0}$ and write $$R' = R_{0}\big\{b^{(1)},\dots, b^{(m)}\big\}.$$
If $I$ is a reflexive difference ideal of a difference ($\sigma$- ) ring $R$, then the factor ring $R/I$ has a unique structure of a $\sigma$-ring such that the canonical surjection $R\rightarrow R/I$ is a difference homomorphism, that is, a ring homomorphism commuting with the action of $\sigma$.  In this case, $R/I$ is said to be the {\em difference} (or $\sigma$-) {\em factor ring} of $R$ by $I$.

Let $U = \big\{u^{(\lambda)} | \lambda \in \Lambda\big\}$ be a family of elements in some $\sigma$-overring of a difference ($\sigma$-) ring $R$. We say that the family $U$ is {\em transformally} (or $\sigma$-{\em algebraically) dependent} over $R$, if the family $$U^{\sigma} = \{u^{(\lambda)}_{k}\,|\,k\in \mathbb{N},\, \lambda \in \Lambda\}$$ is algebraically dependent over $R$ (that is, there exist elements $v^{(1)},\dots, v^{(m)}\in U^{\sigma}$ and a nonzero polynomial $f$ in $m$ variables with coefficients from $R$ such that $f\big(v^{(1)},\dots, v^{(m)}\big) = 0$). Otherwise, the family $U$ is said to be {\em transformally} (or $\sigma$-{\em algebraically) independent} over $R$ or a family of {\em difference} (or $\sigma$-) {\em indeterminates} over $R$.  In the last case, the $\sigma$-ring $R\{U\}$ is called the {\em algebra of difference} (or $\sigma$-) {\em polynomials} in the difference (or $\sigma$-) indeterminates $u^{(\lambda)}$ ($\lambda \in \Lambda$) over $R$.

As it is shown in \cite[Chapter 2, Theorem I]{Cohn} (see also \cite[Proposition 3.3.7]{Levin} for the partial case), if $I$ is an arbitrary set, then there exists an algebra of difference polynomials over $R$ in a family of difference indeterminates with indices from the set $I$. If $S$ and $S'$ are two such algebras, then there exists a
difference isomorphism $S\rightarrow S'$ that leaves all elements of the ring $R$ fixed. If $R$ is an integral domain, then any algebra of difference polynomials over $R$ is an integral domain.

The algebra of difference ($\sigma$-) polynomials in a set of indeterminates $\{y_{i}\,|\,k\in\mathbb{N}, \,i\in I\}$ over $R$ is constructed as follows. Let $S$ be the polynomial $R$-algebra in a countable set of indeterminates $$\big\{y^{(i)}_{k}\,|\,i\in I, k\in\mathbb{N}\big\}.$$ For any $f\in S$, let $\sigma(f)$ denote the polynomial from $S$ obtained by replacing every indeterminate $y^{(i)}_{k}$ that appears in $f$ by $y^{(i)}_{k+1}$ and every coefficient $a\in R$ by $\sigma(a)$. We obtain an injective endomorphism $S\rightarrow S$ that extends $\sigma$ (this extension is also denoted by $\sigma$). Setting $y^{(i)}=y^{(i)}_{0}$, we obtain a $\sigma$-algebraically independent over $R$ set $\big\{y^{(i)}\: |\: i\in I\big\}$ such that $$S = R\big\{\big(y^{(i)}\big)_{i\in I}\big\}.$$ Thus, $S$ is an algebra of $\sigma$-polynomials over $R$ in a family of $\sigma$-indeterminates $\big\{y^{(i)} | i\in I\big\}$.

A difference ideal $I$ of a difference ring $R$ is called {\em perfect} if for any $a\in R$ and $k_{1},\dots ,k_{m}, d_{1},\dots, d_{m}\in \mathbb{N}$ ($m\geq 1$), the inclusion $a_{k_{1}}^{d_{1}}\dots a_{k_{m}}^{d_{m}}\in J$ implies $a\in J$. It is easy to see that every perfect ideal is reflexive and every reflexive prime difference ideal is perfect. If $B\subseteq R$ then the intersection of all perfect difference ideals of $R$ containing $B$ is the smallest perfect difference ideal containing $B$. It is denoted by $\{B\}$ and called the {\em perfect closure} of the set $B$ or the {\em perfect difference ideal generated by $B$}. (It will be always clear whether $\big\{a^{(1)},\dots, a^{(m)}\big\}$ denotes the set consisting of the elements $a^{(1)},\dots, a^{(m)}$ or the perfect difference ideal generated by these elements.)

The construction of the perfect closure via a process of ``shuffling'' is described in \cite[Chapter 3, section 2]{Cohn} where one can also find a proof of the basis theorem, which states that if an ordinary difference ring $R$ satisfies the ascending chain condition (ACC) for perfect difference ideals (in this case $R$ is called a {\em Ritt difference ring}), then the ring of difference polynomials $R\{y^{(1)},\dots, y^{(m)}\}$ in a finite set of difference indeterminates satisfies ACC for perfect difference ideals as well (the partial version of this theorem, the process of ''shuffling'' and some other properties of perfect difference ideals are presented in \cite[Section 2.3]{Levin}). The role of perfect difference ideals (in particular, in the difference version of Nullstellensatz) is similar to the role of radical ideals in commutative rings.

Every perfect difference ideal of a difference ring $R$ is the intersection of a family of reflexive prime difference ideals of $R$, and if $R$ is a Ritt difference ring, then every perfect difference ideal of $R$ is the intersection of a finite number of reflexive prime difference ideals, see \cite[Chapter 3, Section 7]{Cohn}. (Generalizations of these results to partial difference and difference-differential rings can be found in \cite[Chapter 3]{KLMP}.)

A difference ideal $I$ of an ordinary difference ($\sigma$-) ring $R$ is called {\em mixed} if the inclusion $ab\in I$ ($a, b\in R$) implies that
$ab_{1}\in I$. (Note that E. Hrushovski \cite{Hrushovski} uses the term ``well-mixed'' for a mixed difference ideal; we follow here the original terminology of R. Cohn.) One can see that every perfect ideal is mixed and that a $\sigma$-ideal $I$ is mixed if and only if for any nonempty set $S\subseteq R$,\, $$I : S = \{a\in R\,|\,as\in I\ \text{for every}\ s\in S\}$$ is a difference ideal of $R$. Furthermore, the radical of any mixed reflexive difference ideal is a perfect difference ideal. As an example of a mixed difference ideal, which is not perfect, one can consider the ideal $$J = \big[y^{2}, yy_{1}, yy_{2}\dots\big]$$ of the ring $R = K\{y\}$ of difference polynomials in one difference indeterminate $y$ over an ordinary difference field $K$. Indeed, $J$ is mixed since it consists of all difference polynomials in which every term is at least of second degree in the indeterminates $y, y_{1}, y_{2},\dots$. At the same time, $J$ is not perfect, since $y\in \{J\} = [y]$, but  $y\notin J$.

Note also that the property that determines a mixed difference ideal provides a bridge between prime difference ideals and $\sigma$-prime difference ideals in the sense of the author \cite[Definition 2.3.20]{Levin} as well as in the sense of M. Wibmer \cite[Definition 1.4.3]{Wibmer} (who calls them $\phi$-ideals denoting the basic translation by $\phi$). The last type of difference ideals plays the key role in the M. Wibmer's geometric Galois theory developed in \cite{Wibmer}. One can straightforwardly check that any mixed $\sigma$-prime (in any of the two senses) difference ideal is prime.

\section{The main result}

Let $R$ be an ordinary difference ring with translation $\sigma$. For any subset $S$ of $R$, let $\langle S\rangle$ denote the smallest mixed difference ideal containing $S$. Furthermore, let $S'$ denote the set $\{ab_{1}\,|\,ab\in S\}$ (recall that we write $b_{1}$ for $\sigma(b)$).
Note that $S\subseteq S'$, since any $s\in S$ can be written as $s1_{1}$ where $s1\in S$.

\begin{lemma}\label{lem:31}
With the above notation, let $S^{[0]} = S$ and $S^{[n]} = \big[S^{[n-1]}\big]'$ ($n=1, 2,\dots$). Then $$\langle S\rangle = \bigcup_{n=0}^{\infty}S^{[n]}\,.$$
\end{lemma}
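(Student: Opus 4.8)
The plan is to prove the two inclusions separately, writing $U = \bigcup_{n=0}^{\infty} S^{[n]}$ for brevity. The first thing I would record is that the sequence $\big(S^{[n]}\big)_{n\geq 0}$ is increasing. For \emph{any} subset $X\subseteq R$ one has $X\subseteq [X]$ trivially, and $X\subseteq X'$ because each $x\in X$ equals $x1_{1}$ with $x1\in X$ (exactly the remark already made for $S$, applied to the factorization $x = x\cdot 1$). Applying both inclusions to $X = S^{[n]}$ gives $S^{[n]}\subseteq \big[S^{[n]}\big]\subseteq \big[S^{[n]}\big]' = S^{[n+1]}$. Thus the sequence is increasing, and in particular any finite collection of elements of $U$ lies in a common term $S^{[n]}$, hence in the difference ideal $\big[S^{[n]}\big]$ — a fact I will invoke repeatedly.

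For the inclusion $U\subseteq \langle S\rangle$, I would argue by induction that $S^{[n]}\subseteq \langle S\rangle$ for every $n$. The base case $S^{[0]} = S$ is immediate. Assuming $S^{[n-1]}\subseteq \langle S\rangle$, I use that $\langle S\rangle$ is in particular a difference ideal, so the smallest difference ideal containing $S^{[n-1]}$ satisfies $\big[S^{[n-1]}\big]\subseteq \langle S\rangle$. Then for every factorization $ab\in \big[S^{[n-1]}\big]\subseteq \langle S\rangle$, the mixed property of $\langle S\rangle$ yields $ab_{1}\in \langle S\rangle$; hence $S^{[n]} = \big[S^{[n-1]}\big]'\subseteq \langle S\rangle$. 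Taking the union over $n$ gives $U\subseteq \langle S\rangle$.

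For the reverse inclusion I would verify directly that $U$ is a mixed difference ideal containing $S$. Since $S = S^{[0]}\subseteq U$ and $\langle S\rangle$ is by definition the smallest such ideal, this forces $\langle S\rangle\subseteq U$. That $U$ is a difference ideal follows from monotonicity: given $a,b\in U$ and $r\in R$, choose $n$ with $a,b\in S^{[n]}\subseteq \big[S^{[n]}\big]$; since $\big[S^{[n]}\big]$ is a difference ideal, the elements $a-b$, $ra$, and $\sigma(a)$ all lie in $\big[S^{[n]}\big]\subseteq S^{[n+1]}\subseteq U$. Finally $U$ is mixed: if $ab\in U$, choose $n$ with $ab\in S^{[n]}\subseteq \big[S^{[n]}\big]$, so that $ab_{1}\in \big[S^{[n]}\big]' = S^{[n+1]}\subseteq U$ straight from the definition of $S^{[n+1]}$. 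Combining the two inclusions gives $\langle S\rangle = U$.

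I do not anticipate a genuine obstacle: this is a bootstrapping closure argument of the standard ``shuffling'' type. The single point demanding care is the passage from elementwise membership to ideal structure on the union $U$. It is essential that $\big(S^{[n]}\big)$ be increasing, so that finitely many elements of $U$ can be gathered into one term $\big[S^{[n]}\big]$ before the ideal, difference, or mixed closure properties are applied. Establishing this monotonicity up front, from the two elementary inclusions $X\subseteq [X]$ and $X\subseteq X'$, is therefore the step I would be most careful to get exactly right.
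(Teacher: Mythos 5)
Your proposal is correct and follows essentially the same route as the paper: one inclusion by induction using that $\langle S\rangle$ is a mixed difference ideal, the other by verifying that $\bigcup_{n}S^{[n]}$ is itself a mixed difference ideal containing $S$. The only difference is that you spell out the monotonicity $S^{[n]}\subseteq S^{[n+1]}$ needed to see the union is an ideal, a routine point the paper leaves implicit.
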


\begin{proof} It is clear that $\bigcup_{n=0}^{\infty}S^{[n]}$ is a mixed difference ideal of $R$ containing $S$ (if an element $ab$ lies in this union, then $ab\in S^{[n]}$ for some $n$, hence $ab_{1}\in S^{[n+1]}$). Therefore, $$\langle S\rangle\subseteq\bigcup_{n=0}^{\infty}S^{[n]}.$$
Conversely, $S\subseteq \langle S\rangle$ and for any set $A\subseteq R$, the inclusion $A\subseteq \langle S\rangle$ implies that $[A]'\subseteq \langle S\rangle$, since the difference ideal $\langle S\rangle$ is mixed. Proceeding by induction on $n$, we obtain that $S^{[n]}\subseteq \langle S\rangle$ for all $n =1, 2,\dots$.
\end{proof}

\smallskip

Let $K\{y\}$ be the ring of difference polynomials in one difference indeterminate $y$ over an ordinary difference ring $K$ with translation $\sigma$. In what follows power products of the form $$y_{i_{1}}^{k_{1}}\dots y_{i_{m}}^{k_{m}},\quad m\geq 1,\ i_{\nu}, k_{\nu}\in \mathbb{N},\quad \nu = 1,\dots, m$$ will be called {\em terms} and the number $k_{1}+\dots + k_{m}$ will be called the {\em degree} of such a term. Clearly, every difference polynomial $f$ in $K\{y\}$ has a unique representation as a finite sum of terms with nonzero coefficients from $K$. If a term $t$ appears in such a representation, we will say that $t$ is contained in $f$.

Furthermore, if $t = y_{i_{1}}^{k_{1}}\dots y_{i_{m}}^{k_{m}}$ where the transforms of $y$ are arranged in the increasing order (that is, $i_{1}<\dots <i_{m}$), then the nonnegative integer $i_{m} - i_{1}$ will be called the {\em effective order} of $t$ and denoted by $\Eord(t)$. Note that for any term $t$, $$\Eord(t_{k}) = \Eord(t),\quad k\in \mathbb{N}.$$

Let $U$ denote the set of terms $\{u^{(1)}, u^{(2)},\dots\}$, where $$u^{(n)} = y_{n+2^{n}-1}y_{n+2^{n+1}-1},\quad n=0, 1, 2,\dots,$$ and let
$$A^{(1)} = u^{(0)} + u^{(1)} = yy_{1}+y_{2}y_{4},$$ $$A^{(2)} = u^{(2)} + u^{(3)} = y_{5}y_{9}+y_{10}y_{18},$$
$$\dots$$ $$A^{(n)} = u^{(2n-2)} + u^{(2n-1)} = y_{2n-3+2^{2n-2}}y_{2n-3+2^{2n-1}}+y_{2n-2+2^{2n-1}}y_{2n-2+2^{2n}},$$
$$\dots\,  .$$

\begin{remark}\label{rem:32}  Note that $$\Eord\big(u^{(n)}\big) = 2^{n},\quad n\in \mathbb{N}.$$  Furthermore, the effective orders of terms of the set $U$ are all distinct and, moreover, if $u^{(i)}, u^{(j)}, u^{(k)}$, and $u^{(l)}$ are distinct elements of $U$, then $$\Eord\big(u^{(i)}\big) - \Eord\big(u^{(j)}\big)\neq \Eord\big(u^{(k)}\big) - \Eord\big(u^{(l)}\big),$$ $$\Eord\big(u^{(i)}\big) + \Eord\big(u^{(j)}\big)\neq \Eord\big(u^{(k)}\big) + \Eord\big(u^{(l)}\big),$$ and $$\Eord\big(u^{(i)}\big) - \Eord\big(u^{(j)}\big)\neq \Eord\big(u^{(k)}\big) + \Eord\big(u^{(l)}\big).$$ This follows from the  fact that, if $a, b, c,$ and $d$ are distinct nonnegative integers, then $$2^{b} - 2^{a}\neq 2^{d} - 2^{c},\ \   2^{b} - 2^{a}\neq 2^{d} + 2^{c},\ \ \text{and}\ \ 2^{a} + 2^{b}\neq 2^{d} + 2^{c}.$$
\end{remark}

Let us fix some $m\geq 1$ and let $S$ denote the set $\big\{A^{(1)}, \dots, A^{(m)}\big\}$. Furthermore, let $W$ denote the set of all homogeneous difference polynomials of degree $2$ that lie in $[S]$ and for any $n\in \mathbb{N}$, let $W^{[n]}$ denote the set of all homogeneous difference polynomials of degree $2$ that lie in $S^{[n]}$.

\begin{lemma}\label{lem:33}
Every difference polynomial in $W$ is irreducible.
\end{lemma}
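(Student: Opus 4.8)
The plan is to reduce the statement to a purely combinatorial question about the monomial support of the elements of $W$, and then to eliminate the offending configurations using the arithmetic of Remark~\ref{rem:32} together with the explicit positions of the transforms inside the terms $u^{(n)}$.

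First I would pin down $W$ precisely. Since $\sigma$ preserves the degree of a term and each $A^{(i)}$ is homogeneous of degree $2$, every $\sigma^{k}\big(A^{(i)}\big)$ is homogeneous of degree $2$; writing an arbitrary element of $[S]$ as $\sum g_{j}\sigma^{k_{j}}\big(A^{(i_{j})}\big)$ and extracting the degree-$2$ component (only the constant terms of the $g_{j}$ survive) shows that $W$ is exactly the $K$-linear span of the polynomials $\sigma^{k}\big(A^{(i)}\big)$, $1\le i\le m$, $k\ge 0$. Each such generator is the sum of the two distinct terms $\sigma^{k}\big(u^{(2i-2)}\big)$ and $\sigma^{k}\big(u^{(2i-1)}\big)$, and because the effective order of a term determines the exponent $n$ and then its least index determines the shift $k$, all terms $\sigma^{k}\big(u^{(n)}\big)$ are pairwise distinct. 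Hence the generators are $K$-linearly independent, no cancellation occurs, and every nonzero $f\in W$ is a sum of at least two distinct terms, each a product $y_{a}y_{b}$ of two \emph{distinct} transforms.

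Next I would translate reducibility into graph language. A reducible homogeneous quadratic is a product $L_{1}L_{2}$ of two linear forms; as $f$ contains no square $y_{a}^{2}$, the supports of $L_{1}$ and $L_{2}$ are disjoint, so the set of terms of $f$ is exactly $\{y_{p}y_{q}:p\in P,\ q\in Q\}$ for two disjoint finite index sets $P,Q$. Regarding each term $y_{a}y_{b}$ as an edge $\{a,b\}$, this says the \emph{term graph} of $f$ is the complete bipartite graph $K_{P,Q}$, which, having at least two edges, is connected and has a vertex of degree at least $2$. I would then record two tools. (A) Because the two terms of each block $\sigma^{k}\big(A^{(i)}\big)$ lie in $f$ simultaneously or not at all, the edge multiset of $f$ is a disjoint union of ``dominoes'' $\big\{\sigma^{k}(u^{(2i-2)}),\sigma^{k}(u^{(2i-1)})\big\}$; in particular the edge count is even and the multiset of effective orders splits into consecutive pairs $\{2^{2i-2},2^{2i-1}\}$. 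A short computation with $\alpha_{n}=n+2^{n}-1$ shows moreover that the two edges of one domino are vertex-disjoint. (B) The sum, difference and mixed conditions of Remark~\ref{rem:32}.

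With this in place the endgame is a finite case analysis. The star case $\min(|P|,|Q|)=1$ fails at once: all edges would meet in one vertex, yet each domino forces two vertex-disjoint edges into the support. For $|P|,|Q|\ge 2$ every difference $|p-q|$ is a power of $2$; fixing two elements of $P$ and invoking the uniqueness of representations of an integer as $2^{b}-2^{a}$, as $2^{a}+2^{b}$, or as a power of $2$ (Remark~\ref{rem:32}) limits $Q$ to one vertex in each of the three regions it defines, so $|P|,|Q|\le 3$; since $|P|\,|Q|$ must be even by tool (A), only $K_{2,2}$ and $K_{2,3}$ survive. In $K_{2,3}$ the same arithmetic forces the six effective orders to be three \emph{consecutive} powers of $2$, each twice, and such a multiset cannot be split into domino pairs $\{2^{2i-2},2^{2i-1}\}$, contradicting tool (A). For $K_{2,2}$ I would read off the four edge-lengths from the three ways to $2$-colour a $4$-cycle; the sum and mixed conditions kill two patterns outright and reduce the third to a ``parallelogram'' with orders $2^{a},2^{a},2^{a+1},2^{a+1}$. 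I expect this parallelogram to be the main obstacle, since it survives every length-only argument: its two lengths form a legitimate domino pair, so tools (A) and (B) do not exclude it. Here I would finally use the exact positions: the two short and the two long edges must come from exactly two dominoes $\sigma^{k}(A^{(i)}),\sigma^{k'}(A^{(i)})$, and solving for the four shifts from the vertex positions while requiring a domino's long and short edges to carry the \emph{same} shift yields an impossible equation $\kappa=\kappa-1$, the asymmetry being produced by the offset $\alpha_{2i-1}-\alpha_{2i-2}=1+2^{2i-2}$. This eliminates the last case, so no nonzero $f\in W$ factors, which proves the lemma.
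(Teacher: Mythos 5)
Your strategy is essentially the paper's: identify $W$ with the $K$-span of the transforms $\sigma^{k}\big(A^{(i)}\big)$, observe that a factorization $f=L_{1}L_{2}$ of a square-free homogeneous quadratic forces the term set of $f$ to be a complete bipartite configuration on disjoint index sets $P,Q$, kill $\min(|P|,|Q|)=1$ via the two-term structure of the generators, and then run the effective-order arithmetic of Remark~\ref{rem:32} on the cross terms. Your ``domino'' bookkeeping and, above all, the positional elimination of the parallelogram with orders $2^{a},2^{a},2^{a+1},2^{a+1}$ are a genuine refinement rather than padding: that configuration is exactly the one in which the four cross terms are transforms of only \emph{two} distinct elements of $U$, so the sum/difference conditions of Remark~\ref{rem:32} (which require four distinct elements) are silent there, and only the shift computation with the offset $1+2^{2i-2}$ rules it out. (The paper's proof simply asserts that the four cross terms are transforms of four distinct elements of $U$, which is precisely what fails here, so your extra step is doing real work.)

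There is, however, a genuine gap in the reduction to $K_{2,2}$ and $K_{2,3}$. The claim that each of the three regions determined by $p_{1}<p_{2}$ in $P$ contains at most one vertex of $Q$ is false for the middle region: if $p_{2}-p_{1}=2^{a}+2^{b}$ with $a\neq b$, then \emph{both} $q=p_{1}+2^{a}$ and $q=p_{1}+2^{b}$ have both cross distances equal to powers of $2$ (take $p_{1}=0$, $p_{2}=3$, $q\in\{1,2\}$); uniqueness of the unordered pair $\{a,b\}$ does not pin down $q$. Indeed your own surviving parallelogram, with $P=\{c,\,c+3\cdot 2^{a}\}$ and $Q=\{c+2^{a},\,c+2^{a+1}\}$, has both vertices of $Q$ strictly between the two vertices of $P$, contradicting the stated ``one per region'' bound. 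The argument therefore only gives $|P|,|Q|\le 4$, and the configurations $K_{2,4}$, $K_{3,4}$, $K_{4,4}$, and the variant of $K_{2,3}$ with two middle vertices are not covered by your enumeration. They do die under your own tool (A) --- for example the forced length multiset of $K_{2,4}$ is $2^{a}$ four times, $2^{a+1}$ twice, $2^{a+2}$ twice, which admits no partition into domino pairs $\big\{2^{2i-2},2^{2i-1}\big\}$ --- but as written the case analysis is incomplete. A smaller point of the same kind: for the colouring $\{x_{1},x_{3}\}$ versus $\{x_{2},x_{4}\}$ of the $4$-cycle the lengths can be $2^{e},2^{e},2^{e+1},2^{e+2}$ without violating Remark~\ref{rem:32}, since exponents repeat; that pattern is not killed ``outright'' by the sum and mixed conditions and also needs the domino count.
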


\begin{proof} Since $W\subset [S] = \big[A^{(1)}, \dots, A^{(m)}\big]$ and $A^{(1)}, \dots, A^{(m)}$ are homogeneous difference polynomials of degree $2$, an arbitrary element  $f\in W$ can be written as a linear combination of the form
\begin{equation}\label{eq:31}
f = \sum_{i=1}^{m}\sum_{j=1}^{r_{i}}\lambda^{(ij)}A_{j}^{(i)}
\end{equation}
with $0\neq\lambda^{(ij)}\in K$ ($1\leq i\leq m,\, 1\leq j\leq r_{i}$, where $r_{1},\dots, r_{m}\in \mathbb{N}$). Suppose that $f$ is reducible. Then it can be written as
\begin{equation}\label{eq:32}
f = \big(a^{(1)}y_{i_{1}} + \dots + a^{(p)}y_{i_{p}}\big)\big(b^{(1)}y_{j_{1}} + \dots + b^{(q)}y_{j_{q}}\big),
\end{equation}
where
\begin{align*}
0\neq a^{(i)}, b^{(j)}\in K,\ &(1\leq i\leq p,\, 1\leq j\leq q),\ i_{1} < \dots < i_{p},\ j_{1} < \dots < j_{q},\ \text{and}\\
&i_{r}\neq j_{s}\ \text{for any}\ r = 1,\dots, p;\, s = 1,\dots, q.
\end{align*} (The last observation follows from the fact that the effective orders of all terms of $A^{(i)}$, $1\leq i\leq m$, are positive, so the same is true for all terms in the right-hand side of~\eqref{eq:31}; therefore, $f$ cannot contain a term $y_{j}^{2} $, $j\in \mathbb{N}$.) Furthermore, $p > 1$ and $q > 1$. Indeed, if $p =1$ (or $q=1$), then $f$ is divisible by some $y_{k}$. However, this is impossible, since the effective orders of all terms that appear in the right-hand side of~\eqref{eq:31} are distinct, so both terms of any $A_{j}^{(i)}$ ($1\leq i\leq m,\, 1\leq j\leq r_{i}$) are contained in $f$. Clearly, these terms have no common factor.

It follows that the terms $$v^{(1)} = y_{i_{1}}y_{j_{1}},\, v^{(2)} = y_{i_{1}}y_{j_{2}},\, v^{(3)} = y_{i_{2}}y_{j_{1}},\ \text{and} v^{(4)} = y_{i_{2}}y_{j_{2}},$$ which appear in elements $A_{j}^{(i)}$ in the right-hand side of~\eqref{eq:31}, are transforms of four distinct elements of the set $U$ (hence they have the same effective orders as the corresponding elements of $U$) and appear in the right-hand side of~\eqref{eq:32} with nonzero coefficients. Without loss of generality we can assume that $i_{1} < j_{1}$. Then $$\Eord\big(v^{(1)}\big) = j_{1}-i_{1},\quad \Eord\big(v^{(2)}\big) = j_{2}-i_{1},$$ and either  $$\Eord\big(v^{(3)}\big) = i_{2}-j_{1},\quad \Eord\big(v^{(4)}\big) = i_{2}-j_{2}\quad (\text{if}\ i_{2} > j_{2})$$ or $$\Eord\big(v^{(3)}\big) = i_{2}-j_{1},\quad \Eord\big(v^{(4)}\big) = j_{2}-i_{2}\quad (\text{if}\ j_{1} < i_{2} < j_{2})$$ or $$\Eord\big(v^{(3)}\big) = j_{1}-i_{2},\quad \Eord\big(v^{(4)}\big) = j_{2}-i_{2}\quad (\text{if}\ i_{2} < j_{1}).$$ In the first of these three cases, one has $$\Eord\big(v^{(1)}\big) + \Eord\big(v^{(3)}\big) = \Eord\big(v^{(2)}\big) + \Eord\big(v^{(4)}\big) = i_{2} - i_{1}.$$ In the second case, $$\Eord\big(v^{(1)}\big) + \Eord\big(v^{(3)}\big) = \Eord\big(v^{(2)}\big) - \Eord\big(v^{(4)}\big) = i_{2} - i_{1}.$$ The third case gives $$\Eord\big(v^{(1)}\big) - \Eord\big(v^{(3)}\big) = \Eord\big(v^{(2)}\big) - \Eord\big(v^{(4)}\big) = i_{2} - i_{1}.$$ Since any of the last three equalities contradicts the property of effective orders of elements of $U$ established in Remark~\ref{rem:32}, we conclude that $f$ is irreducible.
\end{proof}

\begin{lemma}\label{lem:34}
With the above notation, $W = W^{[n]}$ for $n = 1, 2, \dots$.
\end{lemma}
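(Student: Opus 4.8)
The plan is to exploit the grading of $R = K\{y\}$ by total degree, writing $R = \bigoplus_{d\ge 0} R_{d}$, and to control what the shuffling operation $(\cdot)'$ does to homogeneous components of degree at most $2$. For $f\in R$ let $f_{d}$ denote its degree-$d$ homogeneous component. Recall that $[S]$ is a homogeneous $\sigma$-ideal (it is generated by the degree-$2$ forms $A^{(i)}$), so $W = [S]_{2}$ is its degree-$2$ graded piece: a $K$-subspace that is $\sigma$-stable, hence $\sigma^{k}(W)\subseteq W$. Since $K$ is a field, $R$ is a domain in which the nonzero elements of $K$ are units, and by Lemma~\ref{lem:33} every nonzero element of $W$ is irreducible. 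These three facts are the whole toolkit.

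The easy half is $W\subseteq W^{[n]}$. Because $A\subseteq A'$ for every set $A$, the sets $S^{[n]}$ ascend, with $[S]\subseteq S^{[1]}\subseteq S^{[2]}\subseteq\cdots$; thus each degree-$2$ homogeneous element of $[S]$ already lies in $S^{[n]}$ and so belongs to $W^{[n]}$. For the reverse inclusion I would prove, by induction on $n$, the two statements: (a) $S^{[n]}\subseteq\bigoplus_{d\ge 2}R_{d}$, i.e. no element of $S^{[n]}$ has a component of degree $0$ or $1$; and (b) $[S^{[n]}]_{2}=W$. The base case $n=0$ is immediate. Statement (a) propagates because the lowest-degree component of a product $cd_{1}=c\,\sigma(d)$ has degree $\deg c_{\mathrm{low}}+\deg d_{\mathrm{low}}$, which is $\ge 2$ as soon as $cd\in[S^{[n-1]}]\subseteq\bigoplus_{d\ge 2}R_{d}$ (here I use that $R$ is a domain, so these leading products do not vanish). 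Granting (a), every degree-$2$ element of the ideal $[S^{[n]}]$ is a $K$-linear combination of transforms $\sigma^{k}(x_{2})$ with $x\in S^{[n]}$, so (b) reduces to the single claim that $x_{2}\in W$ for each $x\in S^{[n]}$.

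That claim is the crux. I would write $x=cd_{1}$ with $cd\in[S^{[n-1]}]$ and expand into homogeneous components. From $cd\in\bigoplus_{d\ge 2}R_{d}$ one reads off $c_{0}d_{0}=0$ and $c_{0}d_{1}+c_{1}d_{0}=0$, while the inductive hypothesis (b) gives $(cd)_{2}=c_{0}d_{2}+c_{1}d_{1}+c_{2}d_{0}\in W$. Since $R$ is a domain, $c_{0}=0$ or $d_{0}=0$, and a short case analysis on which of $c_{0},c_{1},d_{0},d_{1}$ vanish — combined with the irreducibility of elements of $W$ — forces the surviving product to be either $0$ or of the form (unit)$\cdot$(element of $W$). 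In each case the expression $x_{2}=c_{0}\sigma(d_{2})+c_{1}\sigma(d_{1})+c_{2}\sigma(d_{0})$ collapses to an element of $W$, using that nonzero scalars in $K$ are invertible and that $\sigma(W)\subseteq W$. This case analysis is the main obstacle, and its whole point is structural: a genuine factorization $cd$ with both factors of positive degree would yield a reducible degree-$2$ form in $W$, which Lemma~\ref{lem:33} prohibits, so shuffling cannot manufacture any essentially new degree-$2$ polynomial.

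Finally I would assemble the pieces. Statement (b), together with the proof just sketched, shows that any degree-$2$ homogeneous $x\in S^{[n]}$ satisfies $x=x_{2}\in W$, giving $W^{[n]}\subseteq W$; combined with the inclusion $W\subseteq W^{[n]}$ from the second paragraph this yields $W^{[n]}=W$ for every $n\ge 1$, as required.
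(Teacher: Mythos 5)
Your proposal is correct and rests on the same three pillars as the paper's own proof: the grading of $K\{y\}$ by total degree, the fact that $K\{y\}$ is a domain whose nonzero scalars are units, and the irreducibility of the elements of $W$ from Lemma~\ref{lem:33}, all organized as an induction on $n$. The difference lies in the inductive invariant. The paper only ever factors an element $f\in W^{[n+1]}$ that is itself homogeneous of degree $2$, writing $f=gh_{1}$ and arguing that either one factor is a scalar or both are linear forms (the latter being excluded by Lemma~\ref{lem:33}); to close the induction it then asserts that every degree-$2$ homogeneous element of $\bigl[S^{[n]}\bigr]$ is a $K$-linear combination of transforms of elements of $W^{[n]}$. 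That assertion tacitly presupposes that the degree-$2$ \emph{component} of an arbitrary, possibly inhomogeneous, element of $S^{[n]}$ again lies in $W$ --- and this is precisely the statement you isolate as your crux claim and prove via the component identities $c_{0}d_{0}=0$, $c_{0}d_{1}+c_{1}d_{0}=0$, $(cd)_{2}\in W$ together with the case analysis. So your route is a mild strengthening of the paper's: it costs a slightly longer computation with homogeneous components, and it buys an induction whose every step is explicitly justified. The case analysis you sketch does go through (when $c_{0}=d_{0}=0$, irreducibility forces $c_{1}d_{1}=0$ and hence $x_{2}=0$; in the remaining cases one of the surviving factors is a nonzero element of $K$ or its image under $\sigma$, and $K$-linearity plus $\sigma$-stability of $W$ finish), so the proposal is complete as outlined.
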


\begin{proof} Since $[S]\subseteq S^{[n]}$, $W\subseteq W^{[n]}$. We will show the opposite inclusion by induction on $n$. If $n = 1$ and $f\in W^{[1]}\subset S^{[1]} = [S]'$, then $$f = gh_{1},$$ where $gh\in [S]$. If $g\in K$ or $h_{1}\in K$ (in the last case $h\in K$), then $f\in [S]$, that is, $f\in W$.

 If $g, h\notin K$, then $g$ and $h_{1}$ are linear homogeneous difference polynomials and hence $h$ is also such a difference polynomial.  Then $gh\in W$, which contradicts the fact that every difference polynomial in $W$ is irreducible (see Lemma~\ref{lem:33}). Thus, $W^{[1]}\subseteq W$, that is, $W^{[1]} =  W$.

Suppose that $W = W^{[n]}$ for some $n\geq 1$. Since $S$ consists of homogeneous difference polynomials of degree $2$, the sets $$[S],\quad S^{[1]} = \{fg_{1}\,|\,fg\in [S]\},\quad \big[S^{[1]}\big],\quad S^{[2]}, \dots$$ cannot contain difference polynomials with terms of degree less than $2$. Therefore, any
homogeneous difference polynomial $h$ of degree $2$ in $[S^{[n]}]$ can be written as a linear combination of the form
\begin{equation}
h = \sum_{i=1}^{d}\sum_{j=1}^{s_{d}}\mu^{(ij)}B_{j}^{(i)}
\end{equation}
with $0\neq\mu^{(ij)}\in K$,  $B^{(i)}\in W^{[n]}$ ($1\leq i\leq d,\, 1\leq j\leq s_{d}$ where $s_{1},\dots, s_{d}\in \mathbb{N}$). By the inductive hypothesis, $$W^{[n]} = W\subseteq [S].$$ Hence, $h$ is a homogeneous difference polynomial of degree $2$ in $[S]$, that is, $h\in W$. Thus, the set of all homogeneous difference polynomials of degree $2$ in $\big[S^{[n]}\big]$ coincides with $W$.

Let $f\in W^{[n+1]}$. Then $f\in \big[S^{[n]}\big]'$, so $$f = gh_{1},$$ where $gh\in \big[S^{[n]}\big]$ and, therefore, $$gh\in W.$$ Repeating the arguments of the first part of the proof we obtain that $f\in W$. Thus, $W^{[n+1]} = W$.
\end{proof}

\begin{theorem}
The ring of difference polynomials $K\{y\}$ over a on ordinary difference field $K$ does not satisfy the ascending chain condition for mixed difference ideals.
\end{theorem}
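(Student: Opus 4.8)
The plan is to build an explicit infinite strictly ascending chain of mixed difference ideals out of the families constructed above. For each $m\geq 1$ set $S_m = \big\{A^{(1)},\dots,A^{(m)}\big\}$ and consider the mixed difference ideals $\langle S_m\rangle$. Since $S_1\subseteq S_2\subseteq\cdots$, one immediately obtains the ascending chain $\langle S_1\rangle\subseteq\langle S_2\rangle\subseteq\cdots$. The whole content of the theorem is then to check that each inclusion is \emph{strict}, and for this it suffices to prove that $A^{(m+1)}\notin\langle S_m\rangle$ for every $m$, since $A^{(m+1)}\in S_{m+1}\subseteq\langle S_{m+1}\rangle$.

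The first step is to reduce membership in the mixed difference ideal $\langle S_m\rangle$ to membership in the ordinary difference ideal $[S_m]$. By the shuffling description of Lemma~\ref{lem:31}, $\langle S_m\rangle = \bigcup_{n=0}^{\infty}S_m^{[n]}$, so if $A^{(m+1)}$ were to lie in $\langle S_m\rangle$ it would lie in $S_m^{[n]}$ for some $n$. Now $A^{(m+1)} = u^{(2m)} + u^{(2m+1)}$ is homogeneous of degree $2$, which places it in $W^{[n]}$ (where, as above, $W$ and $W^{[n]}$ are formed with respect to $S_m$). Applying Lemma~\ref{lem:34} gives $W^{[n]} = W\subseteq [S_m]$, so the question collapses entirely to showing that $A^{(m+1)}\notin [S_m]$.

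The second step settles this ordinary-difference-ideal membership by a degree and effective-order computation. Exactly as in the representation~\eqref{eq:31} used in the proof of Lemma~\ref{lem:33}, since the transforms $A^{(i)}_{k}$ ($1\leq i\leq m$, $k\in\mathbb{N}$) are homogeneous of degree $2$ and $K$ is a field, every homogeneous difference polynomial of degree $2$ in $[S_m]$ is a $K$-linear combination of these transforms. Hence it is enough to show that $A^{(m+1)}$ is \emph{not} such a combination. The terms occurring in the $A^{(i)}_{k}$ with $i\leq m$ are transforms of $u^{(0)},\dots,u^{(2m-1)}$, and since transforms preserve effective order (Remark~\ref{rem:32}), every such term has effective order in $\big\{2^{0},\dots,2^{2m-1}\big\}$. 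But the term $u^{(2m)}$ of $A^{(m+1)}$ has effective order $2^{2m}$, which lies outside this set, so no transform $A^{(i)}_{k}$ with $i\leq m$ can contain the monomial $u^{(2m)}$. Comparing the coefficient of $u^{(2m)}$ on both sides of a putative identity $A^{(m+1)} = \sum \lambda_{ik}A^{(i)}_{k}$ then forces $1 = 0$, a contradiction; thus $A^{(m+1)}\notin[S_m]$ and the chain is strict.

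I expect the only real obstacle to be already absorbed into Lemma~\ref{lem:34}: the delicate fact is that the iterated shuffling operation $A\mapsto [A]'$ never manufactures a new homogeneous difference polynomial of degree $2$, so that passing from $[S_m]$ to the much larger $\langle S_m\rangle$ creates no genuinely new degree-$2$ relations. Granting that lemma, the closing argument is a clean separation of invariants: the effective orders $2^{0},2^{1},2^{2},\dots$ grow fast enough that the newly adjoined generator $A^{(m+1)}$ introduces a monomial whose effective order cannot be matched by any transform of the earlier generators, which is precisely the arithmetic of powers of $2$ recorded in Remark~\ref{rem:32}.
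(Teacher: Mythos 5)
Your proposal is correct and follows essentially the same route as the paper: reduce $A^{(m+1)}\in\langle S_m\rangle$ to $A^{(m+1)}\in[S_m]$ via Lemmas~\ref{lem:31} and~\ref{lem:34}, then rule that out by comparing effective orders of the terms (your coefficient-of-$u^{(2m)}$ phrasing is in fact a slightly more careful rendering of the paper's ``effective order at most $2^{2m-1}$'' argument).
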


\begin{proof} Let us show that, with the above notation, the chain of mixed difference ideals
\begin{equation}\label{eq:34}
\big\langle A^{(1)}\big\rangle\subset\big\langle A^{(1)}, A^{(2)}\big\rangle\subset\dots\subset\big\langle A^{(1)}, A^{(2)},\dots A^{(m)}\big\rangle\subset\dots\end{equation}
is strictly ascending. Indeed, suppose that $$A^{(m+1)}\in\big\langle A^{(1)}, A^{(2)},\dots A^{(m)}\big\rangle$$ for some $m\geq 1$. By Lemma~\ref{lem:31}, there exists $n\in \mathbb{N}$ such that $A^{(m+1)}\in S^{[n]}$, where $S$ denotes the set $\big\{A^{(1)}, A^{(2)},\dots A^{(m)}\big\}$. Since $A^{(m+1)}$ is a homogeneous difference polynomial of degree $2$, we can apply Lemma~\ref{lem:34} and obtain that $$A^{(m+1)}\in W\subseteq [S].$$ However, the inclusion $$A^{(m+1)}\in [S] = \big[A^{(1)}, A^{(2)},\dots A^{(m)}\big]$$ is impossible, since every term of any $A^{(i)}$ ($1\leq i\leq m$) and, therefore, every term of any element of $$\big[A^{(1)}, A^{(2)},\dots A^{(m)}\big]$$ has the effective order at most $2^{2m-1}$ while $A^{(m+1)}$ contains terms of effective orders $2^{2m}$ and $2^{2m+1}$. Thus, the chain of mixed difference ideals~\eqref{eq:34} is strictly ascending.
\end{proof}

\section{Final remarks}

There are two important classes of difference ideals one of which is wider and the other one is narrower than the class of mixed difference ideals. The first one is the class of {\em complete difference ideals}. 

A difference ideal $I$ of a difference ring $R$ is called {\em complete} if the reflexive closure of its radical coincides with the perfect closure of $I$. Equivalently, one can say that a difference ideal $I$ is complete if the presence in $I$ of a product of powers of transforms of an element $a$ implies the presence in $I$ od a power of a transform of $a$.

As it is shown in \cite[Chapter 3, Section 21]{Cohn}, every mixed difference ideal is complete. Therefore, Theorem 3.5. implies the following result.

\begin{corollary}
The ring of difference polynomials $K\{y\}$ over a on ordinary difference field $K$ does not satisfy the ascending chain condition for complete  difference ideals.
\end{corollary}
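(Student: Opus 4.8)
The plan is to prove the corollary with no new construction at all: I would argue that the strictly ascending chain of mixed difference ideals exhibited in the proof of Theorem~3.5 is \emph{itself} a strictly ascending chain of complete difference ideals. The only external ingredient needed is the class inclusion recorded above, namely that every mixed difference ideal is complete (see \cite[Chapter 3, Section 21]{Cohn}); since the class of complete ideals is wider than the class of mixed ideals, each mixed ideal in the chain lies in the larger class unchanged.

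Concretely, first I would recall that every ideal in the chain
$$\big\langle A^{(1)}\big\rangle\subset\big\langle A^{(1)}, A^{(2)}\big\rangle\subset\dots\subset\big\langle A^{(1)},\dots, A^{(m)}\big\rangle\subset\dots$$
of~\eqref{eq:34} is mixed by the definition of $\langle\,\cdot\,\rangle$, and therefore complete. Second, I would invoke Theorem~3.5, which asserts that all of these inclusions are strict. Because the ideals being compared are literally the same objects in both settings, the properness established at the level of mixed ideals is exactly the properness required at the level of complete ideals; no inclusion has to be re-examined and no closure operation is applied.

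I do not anticipate a genuine obstacle, since the argument is a direct transport of the conclusion of Theorem~3.5 across the inclusion mixed $\subseteq$ complete. The one conceptual point worth flagging is why this works for the wider class but would \emph{not} work for a narrower one: passing to a narrower class (such as the perfect ideals) would force one to replace each ideal by a closure, after which strictness of the chain could collapse --- and indeed the Ritt basis theorem cited earlier shows that ACC \emph{does} hold for perfect difference ideals in finitely many indeterminates. For the complete ideals the containing class already holds the ideals verbatim, so the strict ascending chain survives intact, which is all the corollary requires.
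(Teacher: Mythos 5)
Your proposal is correct and coincides with the paper's own argument: the paper likewise deduces the corollary by citing that every mixed difference ideal is complete and observing that the strictly ascending chain of mixed ideals from Theorem~3.5 is therefore a strictly ascending chain of complete ideals. Nothing further is needed.
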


The other class we would like to mention is the class of all mixed radical (not necessarily reflexive) difference ideals. The reflexive closure of any such an ideal is a perfect difference ideal. We know that perfect difference ideals satisfy ACC and we also know now that a wider class of mixed difference ideals does not satisfy ACC. At the same time the question whether mixed radical ideals satisfy the ascending chain condition is still open. 

\bibliographystyle{amsplain}

\end{document}